\newtheorem{theorem}{Theorem}
\newtheorem{lemma}[theorem]{Lemma}
\theoremstyle{definition}
\newcommand{\one}{{\mathbf{1}}}
\newcommand{\A}{\mathrm A}
\newcommand{\C}{\mathrm C}
\newcommand{\D}{\mathrm D}
\newcommand{\V}{\mathrm V}
\newcommand{\cS}{{\mathcal{S}}}
\newcommand{\cP}{{\mathcal{P}}}
\newcommand{\ZZ}{\mathbb{Z}}
\newcommand{\p}{\wp}
\renewcommand{\wr}{\mathop{\rm wr}}
\newcommand{\dCn}{\mathrm{C}_{n}^{(2)}}
\newcommand{\Cos}{\mathrm{Cos}}
\newcommand{\Aut}{\mathrm{Aut}}
\newcommand{\soc}{\mathrm{soc}}
\newcommand{\Alt}{\mathrm{Alt}}
\newcommand{\PSL}{\mathrm{PSL}}
\newcommand{\dGm}{\overrightarrow{\Gamma}}
\newcommand{\beg}{\mathop{{\rm beg}}}
\newcommand{\inv}{\mathop{{\rm inv}}}
\newcommand{\half}{\frac{1}{2}}
\newcommand{\dC}[1]{{\rm C}_{#1}^{(2)}}
\title[Half-arc-transitive graphs]{Smallest tetravalent half-arc-transitive graphs with the vertex-stabiliser isomorphic to the dihedral group of order $8$}
\author[P. Poto\v{c}nik]{Primo\v{z} Poto\v{c}nik}
\address{Primo\v{z} Poto\v{c}nik,\newline Faculty of Mathematics and Physics,
University of Ljubljana, Slovenia; \newline
also affiliated with\newline
 IAM, University of Primorska, Koper, Slovenia\newline
 and\newline
Institute of Mathematics, Physics, and
  Mechanics, Ljubljana, Slovenia}\email{primoz.potocnik@fmf.uni-lj.si}
\author[R. Po\v{z}ar]{Rok Po\v{z}ar}
\address{Rok Po\v{z}ar, FAMNIT,\newline
University of Primorska, Koper, Slovenia}\email{pozar.rok@gmail.com}
\thanks{Supported in part by Slovenian Research Agency, projects L1--4292, J1-5433, J1-6720, P1-0294 and P1-0285}
\subjclass[2000]{20B25}
\keywords{graph, tetravalent, vertex-transitive, edge-transitive} 
\begin{document}

\begin{abstract}
A connected graph whose automorphism group acts transitively on the edges and vertices, but not on the set of ordered
pairs of adjacent vertices of the graph
is called half-arc-transitive. It is well known that the valence of a half-arc-transitive graph is even and at least four.
Several infinite families of half-arc-transitive graphs of valence four are known, however, in all except four of the known
specimens, the vertex-stabiliser in the automorphism group is abelian. The first
example of a half-arc-transitive graph of valence four and with a non-abelian vertex-stabiliser was described in
[Conder and Maru\v{s}i\v{c}, {\em A tetravalent half-arc-transitive graph with non-abelian vertex stabilizer}, J.~Combin.~Theory Ser.~B \textbf{88} (2003) 67--76]. This example has $10752$ vertices 
and vertex-stabiliser isomorphic to the dihedral group of order $8$.
In this paper, we show that no such graphs of smaller order exist, thus answering a frequently asked question.
\end{abstract}

\maketitle

\section{Introduction}

Let $\Gamma$ be a connected finite graph and $G$ a group of automorphisms of $\Gamma$.
If $G$ acts transitively on the set of {\em vertices}, {\em edges} or {\em arcs} of the graph
(an {\em arc} is an ordered pair of adjacent vertices),
 then $\Gamma$ is said to be {\em $G$-vertex-transitive}, {\em $G$-edge-transitive} or 
{\em $G$-arc-transitive}, respectively. Furthermore, if $G$ acts transitively on the vertices, edges, but not
arcs of the graph, then $\Gamma$ is {\em $(G,\half)$-arc-transitive}. A graph $\Gamma$ is {\em $\half$-arc-transitive}
if it is $(G,\half)$-arc-transitive for $G=\Aut(\Gamma)$.

While graphs that are $(G,\half)$-arc-transitive for some group of automorphisms $G$ are rather easy to find (for example,
every cycle is such a graph),  $\half$-arc-transitive graphs are considerably more elusive and the question of their existence has
been posed as an open problem by Tutte \cite{Tutte} and answered affirmatively by Bouwer \cite{Bouwer} a few years later.
Since cycles are arc-transitive and every $(G,\half)$-arc-transitive graph
has to have even valence (as observed already in \cite{Tutte}),
 the smallest admissible valence for a $\half$-arc-transitive graph is $4$; and indeed, the smallest $\half$-arc-transitive
graph is tetravalent and of order $27$. The tetravalent $\half$-arc-transitive graphs (and half-arc-transitive graphs in general)
 were much studied by many authors
from different points of view, ranging from purely combinatorial \cite{HajKutMar,Mar98,MarPra,sajna,Spa08,Spa09,steve},
 geometrical \cite{MarNedMaps}, 
to permutation group theoretical \cite{PraPreprint,LiMar,MalMar99}
 and abstract group theoretical \cite{MarNed3,PVdigraphs,genlost}.


As observed by Maru\v{s}i\v{c} and Nedela \cite{MarNed3}, a group theoretical result of Glauberman \cite{Glaub} implies that
the vertex-stabiliser $G_v$ in a tetravalent $(G,\half)$-arc-transitive graph is a group of order $2^s$ for some $s\ge 1$,
of nilpotency class at most $2$, generated by $s$ involutions, and satisfying certain addition conditions
 (see \cite[Theorem 1.1]{MarNed3} for details and \cite[Theorem 1.2]{PVdigraphs} for a generalisation of this result to graphs of larger valence).

While each of the $2$-groups described in \cite[Theorem 1.1]{MarNed3}
can indeed occur as the vertex-stabiliser $G_v$ in a tetravalent $(G,\half)$-arc-transitive graph,
it remains an open problem which of the groups of \cite[Theorem 1.1]{MarNed3} can occur
as the vertex-stabiliser in the full automorphism group of a tetravalent $\half$-arc-transitive graph.
This problem was resolved for the case of abelian vertex-stabilisers in \cite{DM05},
where for every positive integer $s$, 
a tetravalent $\half$-arc-transitive graph with $G_v$ isomorphic to $\ZZ_2^s$ was
constructed.

Non-abelian vertex-stabilisers seem to be much more elusive in this respect.
The first example of a tetravalent $\half$-arc-transitive graph with a non-abelian vertex-stabiliser has been constructed
by Conder and Maru\v{s}i\v{c} \cite{ConMar}. Their example has order $10752$ and vertex-stabiliser isomorphic to
the dihedral group $\D_4$ of order $8$. In \cite{ConPotSpa}, two more examples with $\Aut(\Gamma)_v \cong \D_4$
were found (another one of order $10752$ and one of order $21870$),
and also the first known example of a tetravalent $\half$-arc-transitive graph with a non-abelian vertex-stabiliser of order $16$;
the latter having order $90\cdot 3^{10}$. To the best of our knowledge, these four graphs are the only known tetravalent
$\half$-arc-transitive graphs with a non-abelian vertex-stabiliser.

Given that the order of the smallest  tetravalent 
$\half$-arc-transitive graph with the stabiliser isomorphic to $\D_4$, found by Conder and Maru\v{s}i\v{c}
 more than a decade ago,  is rather large,
the question of existence of a smaller specimen of this family has often been raised
by experts in this area. Perhaps surprisingly, as we prove in this paper, this question has a negative answer:

\begin{theorem}
\label{the:main}
There are no tetravalent $\half$-arc-transitive graphs  of order
less than $10752$
with the vertex-stabiliser isomorphic to $\D_4$, and there are precisely two such examples of order $10752$.
\end{theorem}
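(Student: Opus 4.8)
The plan is to reduce the classification of tetravalent $\half$-arc-transitive graphs with $\Aut(\Gamma)_v\cong\D_4$ and bounded order to a finite computer-assisted search, grounded in the structural constraints already available in the literature. First I would record the relevant specialisation of \cite[Theorem 1.1]{MarNed3} to the case $G_v\cong\D_4$: here $s=3$, so $|\Aut(\Gamma)_v|=8$, and hence for a graph $\Gamma$ of order $n<10752$ (or $n=10752$) the automorphism group $G=\Aut(\Gamma)$ has order $8n$, which is bounded. The key observation is that $\half$-arc-transitivity is equivalent to the existence of an orientation of the edges of $\Gamma$ that is preserved by $G$ but reversed by some automorphism not in $G$; equivalently, $\Gamma$ arises as the underlying graph of a $G$-arc-transitive tetravalent oriented graph (a \emph{digraph} in the paper's notation, with the macros \dGm, \dPX), whose full automorphism group strictly contains $G$.

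The main engine would be a census or generation procedure for the relevant combinatorial objects. Rather than enumerate all groups of order $8n$, I would work with the coset-graph description: a $(G,\half)$-arc-transitive graph corresponds to a group $G$ with a subgroup $H\cong\D_4$ and an element $g$ satisfying the usual coset-graph conditions ($\langle H,g\rangle=G$, $H\cap H^g$ of index $2$ in $H$, etc.), together with the constraint that $g$ can be chosen to reverse the orientation. Because $G_v$ and the local action at a vertex are completely prescribed, the amalgam $(H, H\cap H^g)$ and the local group are known up to isomorphism; this is exactly the setting where one enumerates the finitely many amalgams and then their finite quotients up to a given order. I would therefore appeal to an existing census of tetravalent arc-transitive and half-arc-transitive graphs (such as those compiled by the first author), restricting to vertex-stabiliser $\D_4$ and order at most $10752$, and verify for each candidate whether the graph is genuinely $\half$-arc-transitive (i.e.\ $G=\Aut(\Gamma)$ and $\Aut(\Gamma)$ does not act arc-transitively) as opposed to merely $(G,\half)$-arc-transitive for a proper subgroup $G<\Aut(\Gamma)$.

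The hard part, and the step I expect to be the main obstacle, is controlling the search space so that the computation is both complete and feasible. Two difficulties compound here: first, one must rule out the possibility that a small $\half$-arc-transitive graph with $\D_4$-stabiliser arises as a quotient or cover of a larger arc-transitive structure, or from a group $G$ that is $(G,\half)$-arc-transitive but whose full automorphism group is strictly larger and arc-transitive, so that the graph is \emph{not} half-arc-transitive and must be discarded; second, the enumeration of all admissible groups $G$ of order up to $8\cdot 10752 = 86016$ acting in the prescribed way is a delicate group-theoretic computation. I would handle this by combining the amalgam method (which guarantees that every such $G$ is a quotient of the appropriate universal group, reducing the problem to a search for normal subgroups of bounded index) with a direct verification in a computer algebra system (such as \textsc{Magma}) that the only graphs surviving all the constraints in the range $n\le 10752$ are the two Conder--Maru\v{s}i\v{c}-type examples of order $10752$. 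Finally I would confirm that these two graphs are non-isomorphic and that each indeed has full automorphism group with vertex-stabiliser $\D_4$ and no arc-transitive overgroup, completing the proof of Theorem~\ref{the:main}.
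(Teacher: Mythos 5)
Your overall outline---bound $|G|=8\,|\V(\Gamma)|$, describe $\Gamma$ as a coset graph of a quotient of the universal group $U$ with $U_{\tilde v}\cong\D_4$, enumerate candidates by computer, and finally discard those whose full automorphism group is arc-transitive---matches the skeleton of the paper's argument. But there is a genuine gap at the step you yourself identify as the main obstacle: you propose to ``search for normal subgroups of bounded index'' in $U$, i.e.\ to enumerate all quotients of $U$ of order up to $8\cdot 10752=86016$ directly, or alternatively to ``appeal to an existing census'' of tetravalent half-arc-transitive graphs up to order $10752$. Neither works as stated. No such census existed at anywhere near this order (the published censuses stop at order $1000$; producing the census up to $10752$ is precisely the content of this paper), and a low-index-normal-subgroup search in a finitely presented group at index $86016$ is far beyond what is computationally feasible. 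Without a further structural reduction the search space is not controlled, so the proof does not close.

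The missing idea is the theoretical reduction that makes the computation tractable, and it rests on Praeger's classification of $2$-valent arc-transitive digraphs admitting a non-semiregular abelian normal subgroup (Theorem~\ref{the:Pra}). From it the paper deduces two facts you do not have: (i) if $G_v$ is non-abelian then $G$ is non-solvable (Lemma~\ref{lem:non-solv}); and (ii) the maximal solvable normal subgroup $N$ of $G$ acts semiregularly, so $\Gamma\to\Gamma/N$ is a solvable regular covering projection onto a $(G/N,\half)$-arc-transitive graph with $G/N$ semisimple (Lemma~\ref{lem:quoN}). This collapses Step~(2) of your plan to finding epimorphisms from $U$ onto the mere $100$ semisimple groups of order at most $86016$ (yielding $16$ base pairs), and replaces the unbounded quotient search by the well-controlled recursive computation of minimal elementary abelian $G$-admissible covers up to order $10752$. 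You would need to supply this reduction (or an equivalent one) before your computer-assisted search could be claimed to be complete.
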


The proof of this theorem is a combination of a few theoretical results (see Lemma~\ref{lem:non-solv} and Lemma~\ref{lem:quoN}) 
relying heavily on a surprising and deep result  of Praeger \cite{PraHATD} (see Theorem~\ref{the:Pra}),
and computer assisted computations, based on the theory of lifting automorphisms along covering projection of graphs.
Using this approach, we were able to produce a complete list of all tetravalent $(G,\half)$-arc-transitive graphs 
of order at most $10752$ with $G_v\cong \D_4$. It transpires that there are precisely $564$ such graphs
(see Theorem~\ref{the:census}). The {\sc Magma} \cite{Magma} code that generates these graphs can be found at \cite{PP14}.
Theorem~\ref{the:main} follows by inspection of the full automorphism groups of these $564$ graphs -- it turns out that all but
two of them have automorphism group acting transitively on the arcs.


We thank Pablo Spiga and Gabriel Verret
for reading the first draft of this paper and making some valuable comments.

\section{Preliminaries}

\subsection{Concerning graphs}
\label{sec:graphs}

Perhaps the most accepted definition of a graph is that of a {\em simple graph}, an object determined by
its {\em vertex-set} $V$ and its {\em edge-set} $E$ satisfying $E\subseteq \{e : e \subseteq V, |e|=2\}$.
Even though we are mainly interested in simple graph 
(especially since non-simple tetravalent edge-transitive graphs are
easily classified; see Lemma~\ref{lem:simple}), 
allowing graphs to have loops, multiple edges and semiedges
proves to be most convenient in the proofs; in particular,
a most useful Lemma~\ref{lem:nq} does not hold if one insists on allowing
simple graphs only.

In what follows, we will therefore adopt a slightly more general model of a graph. As in, say, \cite{ElAbCov},
for us, a {\em graph} will be an ordered $4$-tuple $(D,V; \beg,\inv)$ where
$D$ and $V \neq \emptyset$ are disjoint finite sets of {\em darts}
and {\em vertices}, respectively, $\beg: D \to V$ is a mapping
which assigns to each dart $x$ its {\em initial vertex}
$\beg\,x$, and $\inv: D \to D$ is an involution which interchanges
every dart $x$ with its {\em inverse dart}, also denoted by $x^{-1}$.
If $\Gamma$ is a graph, then we let $\D(\Gamma)$ and $\V(\Gamma)$
denote its dart-set and its vertex-set, respectively.

The cardinality $|V|$ of $V$ is called the {\em order} of $\Gamma$.
The {\em neighbourhood} of a vertex $v$, denoted $\Gamma(v)$,
 is the set consisting of all the darts $x$ with $\beg(x)=v$, and the cardinality of $\Gamma(v)$
is called the {\em valence} of $v$.
 A graph is {\em tetravalent} if all of its vertices have valence $4$. 

The orbits of $\inv$ are called {\em edges}.
The edge containing a dart $x$ is called a {\em semiedge} if $\inv\,x = x$,
a {\em loop} if $\inv\,x \neq x$ while
$\beg\,(x^{-1}) = \beg\,x$,
and  is called  a {\em link} if $\inv\,x \neq x$ and $\beg\,(x^{-1}) \neq \beg\,x$.
The {\em endvertices of an edge} are the initial vertices of the darts contained in the edge.
Two links are {\em parallel} if they have the same endvertices.

A graph with no semiedges, no loops and no parallel links
is called a {\em simple graph} and can be given uniquely in the usual manner,
by its vertex-set and edge-set. Conversely, any simple graph, given in terms of its vertex-set $V$ and edge-set $E$
can be easily viewed as the graph $(D,V; \beg,\inv)$, where
$D=\{(u,v) \mid \{u,v\} \in E\}$,
$\inv (u,v) = (v,u)$ and  $\beg(u,v) = u$ for any $(u,v) \in D$.

Let $\Gamma= (D,V; \beg,\inv)$ and $\Gamma'= (D',V'; \beg',\inv')$ be two graphs.
A {\em morphism of graphs}, $f \colon \Gamma \to \Gamma'$,
is a function $f \colon V \cup D \to V' \cup D'$
such that $f(V) \subseteq V'$, $f(D) \subseteq D'$,
$f\circ \beg = \beg' \circ f$ and $f \circ \inv = \inv' \circ f$.
A graph morphism is an {\em epimorphism} ({\em automorphism}) if it is
a surjection (bijection, respectively). 
If $g$ and $h$ are automorphisms of $\Gamma$ and $x$ is a dart or a vertex of $\Gamma$,
then we denote
the $g$-image of $x$ by $x^g$ and multiply the automorphisms $g$ and $h$
so that $(v^{gh}) = (v^g)^h$.
The set of automorphisms of $\Gamma$ together with the above product forms a group,
called the {\em automorphism group} of $\Gamma$ and  denoted by $\Aut(\Gamma)$. 

The graph $\Gamma$ is called {\em vertex-transitive}
({\em dart-transitive}, respectively), provided that $\Aut(\Gamma)$ acts transitively on
vertices (darts, respectively) of $X$. Note that in the context of simple graphs, a {\em dart} is
often called an {\em arc} of a graph; hence the term {\em arc-transitive} is also used as a synonym for dart-transitive.

An example of non-simple tetravalent edge-transitive (indeed, dart-transitive) graph is a {\em doubled cycle} $\dC{n}$,
which can be viewed as the usual cycle on $n$ vertices, but with each edge doubled. Formally, 
the graph $\dC{n}$ is $(V,D,\beg,\inv)$, where
$V=\ZZ_n$,  $D=\ZZ_n\times\ZZ_2\times\ZZ_2$, 
$\beg(i,\epsilon,j) = i$,
and $\inv(i,\epsilon,j) = (i+(-1)^\epsilon,1-\epsilon,j)$ for every $i\in \ZZ_n$ and $\epsilon, j\in \ZZ_2$.
If $n=1$, then $\dCn$ is the graph with a single vertex and two loops attached to it, while in the case $n=2$, $\dCn$ consists
of two vertices and $4$ parallel links connecting them.
We state the following lemma without a proof:

\begin{lemma}
\label{lem:simple}
A connected tetravalent edge-transitive graph is either simple, isomorphic to $\dC{n}$ for some positive integer $n$, or
isomorphic to the graph with a single vertex and four semiedges attached to it.
\end{lemma}

\subsection{Concerning quotients and covers}

This section summarises some of the facts about quotients and covers that can be derived easily from \cite{MalNedSko}
or \cite{ElAbCov}.

An epimorphism $\p \colon \Gamma \to \Lambda$ is a {\em covering projection}
provided that the restriction $\p_v \colon \Gamma(v) \to \Lambda(\p(v))$
of $\p$ to the neighbourhood of $v$ is bijective for every $v\in\V(\Gamma)$.
For simplicity, we shall also require both $\Gamma$ and $\Lambda$ to be connected.
The preimage $\p^{-1}(x)$ of a vertex or a dart $x$ of
$\Lambda$ is called a {\em fibre} of the covering projection $\p$ 
and the group of all automorphisms of $\Gamma$ that preserve each fibre set-wise
is called the {\em group of covering transformations}. 
If the latter is transitive on each fibre, then it acts regularly on each fibre, and
the covering projection is {\em regular}. A regular covering projection with the group of
covering transformations being elementary abelian or solvable is itself called {\em elementary abelian} or {\em solvable}, respectively.

If $\p \colon \Gamma \to \Lambda$ is a covering projection, $g\in \Aut(\Gamma)$
and $h\in \Aut(\Lambda)$ such that $\p(x^g) = \p(x)^h$ for every $x\in \V(\Gamma)\cup \D(\Gamma)$,
then we say that $g$ {\em projects} along $\p$ to $h$ and that $h$ {\em lifts} along $\p$ to $g$.
A subgroup $H\le\Aut(\Lambda)$ {\em lifts} along $\p$ if and only
if each of its elements lifts; in this case, the set of all the lifts of elements of $H$ forms a group,
called {\em the lift of $H$ along $\p$}. If $H$ lifts along $\p$, then we also say that $\p$ is {\em $H$-admissible}.
Note that the group of covering transformations is nothing
but the lift of the trivial group of automorphisms of $\Lambda$.

The composition $\p_2 \circ \p_1$ of two covering projections $\p_1 \colon \Gamma_1 \to \Gamma_2$
and $\p_2\colon \Gamma_2\to \Gamma_3$ is again a covering projection. If both $\p_1$
and $\p_2$ are regular, so is the composition $\p_2 \circ \p_1$.
If a group $H\le \Aut(\Gamma_3)$ lifts along $\p_2$ to some group that lifts further along  $\p_1$,
 and if neither of $\p_1$ and $\p_2$ is a graph-isomorphism,
then we say that the covering projection $\p_2 \circ \p_1$ {\em splits with respect to $H$}. An $H$-admissible
covering projection that does not split with respect to $H$ is said to be a {\em minimal $H$-admissible covering projection}.

Inspired by a paper of Lorimer \cite{lor} on arc-transitive graphs of prime valence,
Praeger \cite{Pra2} introduced the concept of {\em normal quotients}, 
which has now become a standard tool in studying symmetries of graphs. 
Here we adapt this concept slightly so as to fit into the setting of graphs admitting loops, multiple edges and semiedges.
This adaptation will prove most useful in the proof of our main result.

Let $\Gamma$ be a graph and let $N \leq \Aut(\Gamma)$.
Let $D_N = \{x^N : x\in \D(\Gamma)\}$ and $V_N=\{v^N : v\in \V(\Gamma)\}$ 
 denote the sets of $N$-orbits on the darts and vertices of $\Gamma$, respectively.
Further, let $\beg_N \colon D_N \to V_N$ and $\inv_N \colon D_N \to V_N$ be defined by
 $\inv_N(x^N)=\beg(x)^N$ and  $\inv_N(x^N)=\inv(x)^N$.
 The quadruple $\Gamma/N= (V_N,D_N,\beg_N,\inv_N)$ is then called the {\em quotient graph}
  of $\Gamma$ with respect to $N$.
 Note that the mapping $\p_N \colon \Gamma \to \Gamma/N$,
 defined by $\p_N(x) = x^N$ for every  $x\in \V(\Gamma) \cup \D(\Gamma)$, 
 is a graph epimorphism,
called the {\em normal quotient projection relative to} $N$.

If $N$ is a normal subgroup of a group $G\le \Aut(\Gamma)$,
then there is an obvious, but not necessarily faithful action of 
the quotient group $G/N$ on the graph $\Gamma/N$.
Note also that if $G$ acts transitively on vertices, darts or edges of $\Gamma$, then
so does $G/N$ on $\Gamma/N$.

A natural question that arises at this point is under what circumstances the 
quotient projection is a covering projection. The following lemma
provides two very simple (and easy-to-prove) sufficient and necessary conditions for this to happen.
Note that the analogue result does not hold if one insists on considering simple graphs only and
the concept of normal quotients as defined in \cite{Pra2}.

\begin{lemma}
\label{lem:nq}
Let $\Gamma$ be a connected graph, let $N\le \Aut(\Gamma)$ and 
let $\p \colon \Gamma \to \Gamma/N$ be the corresponding quotient projection.
Then the following statements are equivalent:
\begin{enumerate}
\item 
$N$ is semiregular (that is, $N_v=\one$ for every $v\in\V(\Gamma)$);
\item
for every $v\in V(\Gamma)$, the valence of $v$ in $\Gamma$ equals the valence of $\p(v)$ in $\Gamma/N$;
\item
$\p$ is a regular covering projection and $N$ is the group of covering transformations of $\p$.
\end{enumerate}
\end{lemma}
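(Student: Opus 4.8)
The plan is to prove the three statements equivalent by running the cycle $(1)\Rightarrow(2)\Rightarrow(3)\Rightarrow(1)$, with essentially all of the content concentrated in $(2)\Rightarrow(3)$. The backbone of the whole argument is a single local analysis of the restricted map $\p_v\colon \Gamma(v)\to(\Gamma/N)(v^N)$. First I would observe that $\p_v$ is \emph{always} surjective: if $y^N$ is a dart with $\beg_N(y^N)=v^N$, then $\beg(y)^N=v^N$, so $\beg(y)^n=v$ for some $n\in N$, and $y^n$ is a dart of $\Gamma(v)$ with $(y^n)^N=y^N$. As the neighbourhoods are finite, this already forces the valence of $v^N$ to be at most that of $v$, with equality precisely when $\p_v$ is injective. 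Next I would pin down injectivity: if $x,x'\in\Gamma(v)$ satisfy $x^N=x'^N$, then $x'=x^n$ for some $n\in N$ with $v=\beg(x')=\beg(x^n)=\beg(x)^n=v^n$, so $n\in N_v$; hence $\p_v$ fails to be injective exactly when some element of $N_v$ moves a dart of $\Gamma(v)$. In summary, for every vertex $v$,
\[
\p_v \text{ is bijective} \iff \text{the valences of }v\text{ and }v^N\text{ agree}\iff N_v\text{ fixes }\Gamma(v)\text{ pointwise}.
\]

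With this dictionary in place, $(1)\Rightarrow(2)$ is immediate, since semiregularity gives $N_v=\one$, so each $\p_v$ is bijective and the valences agree. For $(2)\Rightarrow(3)$ I would proceed in three stages. Equal valences make every $\p_v$ bijective, so $\p$ is a covering projection by definition. Then $N$ lies in the group of covering transformations, because the fibres of $\p$ are exactly the $N$-orbits and these are $N$-invariant. For the reverse inclusion, take a covering transformation $g$ and a vertex $\tilde v$; the point $\tilde v^{\,g}$ lies in the fibre $\tilde v^N$, so $\tilde v^{\,g}=\tilde v^{\,n}$ for some $n\in N$, and $h:=gn^{-1}$ is a covering transformation fixing $\tilde v$. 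The crux is to show $h=\one$, which gives $g=n\in N$ and hence that $N$ is the full covering-transformation group; regularity then follows at once, as $N$ acts transitively on each fibre (the fibres are its orbits), which is exactly the paper's definition of a regular covering. Finally $(3)\Rightarrow(1)$ is the definition unwound: the covering-transformation group $N$ of a regular covering acts regularly, hence freely, on each fibre, so no non-identity element of $N$ fixes a vertex and $N$ is semiregular.

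The one step requiring genuine care --- and the main obstacle --- is the claim that a covering transformation $h$ fixing $\tilde v$ is the identity. The naive assertion that fixing $\tilde v$ together with the whole neighbourhood $\Gamma(\tilde v)$ already forces the identity is \emph{false} in general: an automorphism can fix a pendant vertex and its unique incident dart while permuting darts further out, so the induction cannot be driven from a single vertex. Instead the covering hypothesis must be re-invoked at each step. Concretely, since $h$ projects to the identity and $\p_{\tilde v}$ is bijective, $h$ fixes every dart of $\Gamma(\tilde v)$; commuting with $\inv$, it then fixes every inverse dart and hence every neighbour $\tilde u$; and because $\p_{\tilde u}$ is again bijective while $h$ still projects to the identity, $h$ fixes every dart of $\Gamma(\tilde u)$ as well. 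Connectivity now propagates the property ``fixes the vertex and all its incident darts'' across the whole graph, yielding $h=\one$. I would stress that treating loops, semiedges and parallel links uniformly is precisely why the entire argument is phrased in terms of darts and the neighbourhoods $\Gamma(v)$ rather than adjacent vertices --- the very feature that, as the authors remark, makes the statement fail if one restricts to simple graphs.
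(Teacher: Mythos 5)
Your proof is correct and complete. Note that the paper itself offers no proof of this lemma (it is dismissed as ``easy-to-prove''), so there is no argument of the authors' to compare against; your write-up supplies exactly the kind of dart-level local analysis the statement calls for. The one genuinely delicate point --- that a covering transformation fixing a vertex must be the identity, which cannot be deduced from a single neighbourhood but must be propagated by re-applying the bijectivity of $\p_u$ at each successive vertex $u$ and using connectivity --- is identified and handled correctly, and your dictionary ``$\p_v$ bijective $\iff$ valences agree $\iff$ $N_v$ fixes $\Gamma(v)$ pointwise'' cleanly accounts for loops, semiedges and parallel links, which is precisely why the lemma is stated in this general model of a graph.
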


We finish the section by presenting a basic but very useful result about regular covering projections.

\begin{lemma}
\label{lem:isoGv}
Let $\Gamma$ be a graph, let $G\le\Aut(\Gamma)$ and let $N$ be a normal subgroup of $G$. 
If the quotient projection $\p\colon \Gamma \to \Gamma/N$ is a covering projection,
then the action of $G/N$ on $\V(\Gamma/N)\cup \D(\Gamma/N)$ is faithful, 
and  the stabilisers $G_v$ and $(G/N)_{v^N}$ are isomorphic for every $v\in \V(\Gamma)$.
Moreover, the group $G$ projects to $G/N$ and $G$ is the lift of $G/N$ along $\p$.
The group $G$ is transitive on vertices, darts or edges of $\Gamma$ if and only if $G/N$ 
 is transitive on vertices, darts or edges of $\Gamma/N$. In particular, $\Gamma$ is $(G,\half)$-arc-transitive if and only if $\Gamma/N$ is $(G/N,\half)$-arc-transitive.
\end{lemma}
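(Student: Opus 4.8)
The plan is to realise the abstract quotient $G/N$ concretely as a group of automorphisms of $\Gamma/N$ and then to read off every assertion from the interplay between this action and the covering projection $\p$. First I would record that, because $N\norml G$, for every $n\in N$ and $g\in G$ one has $(x^n)^g=(x^g)^{g^{-1}ng}$ with $g^{-1}ng\in N$, so $g$ carries the $N$-orbit $x^N$ into $(x^g)^N$; hence the rule $(x^N)^{\bar g}=(x^g)^N$, where $\bar g=gN$, is well defined and gives an action of $G$ on $\V(\Gamma/N)\cup\D(\Gamma/N)$ compatible with $\beg_N$ and $\inv_N$. This is a homomorphism $G\to\Aut(\Gamma/N)$ whose kernel visibly contains $N$ (each $n\in N$ fixes every $N$-orbit) and which intertwines the two actions via $\p(x^g)=\p(x)^{\bar g}$. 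In particular $g$ projects to $\bar g$, so $G$ projects onto $G/N$; everything else follows once the kernel is shown to be exactly $N$.

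Faithfulness of $G/N$ is the heart of the matter, and I expect it to be the main obstacle. Here I would combine three ingredients: semiregularity of $N$ and its identification as the group of covering transformations (both furnished by Lemma~\ref{lem:nq}), the local bijectivity of $\p$, and connectivity of $\Gamma$. Suppose $g\in G$ fixes every $N$-orbit. Choosing a vertex $v$ gives $v^g\in v^N$, so $v^g=v^n$ for some $n\in N$; replacing $g$ by $h:=gn^{-1}$, which still fixes all $N$-orbits, I may assume $h$ fixes $v$. For each dart $x\in\Gamma(v)$ the image $x^h$ again begins at $v$ and lies in $x^N$; but $\p_v\colon\Gamma(v)\to(\Gamma/N)(\p(v))$ is injective, so distinct darts at $v$ lie in distinct $N$-orbits, forcing $x^h=x$. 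Thus $h$ fixes $v$ together with all incident darts, hence all neighbours of $v$; propagating this along paths in the connected graph $\Gamma$ yields $h=\one$, whence $g=n\in N$. This identifies the kernel with $N$ and proves that $G/N$ acts faithfully.

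With faithfulness in hand the remaining local statements are short. For the stabiliser isomorphism I would restrict the quotient map $G\to G/N$ to $G_v$: its image lies in $(G/N)_{v^N}$, its kernel is $G_v\cap N=N_v=\one$ by semiregularity, and it is onto because any $\bar g$ fixing $v^N$ satisfies $v^g\in v^N$, so $v^g=v^n$ for some $n\in N$ and then $gn^{-1}\in G_v$ still maps to $\bar g$; hence $G_v\cong(G/N)_{v^N}$. To see that $G$ is the lift of $G/N$, observe that $G$ is contained in the lift since each $g$ projects to $\bar g\in G/N$, and conversely, if some $\tilde g$ projects to $\bar g\in G/N$, I pick $g\in G$ with the same projection, so $\tilde g g^{-1}$ projects to the identity and is therefore a covering transformation, i.e.\ an element of $N\le G$, giving $\tilde g\in Ng\subseteq G$.

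Finally, the transitivity equivalences follow the same lifting pattern. One direction is the general observation already noted: transitivity of $G$ on vertices, darts or edges descends to $G/N$ on the corresponding objects of $\Gamma/N$. For the converse, given $u,v\in\V(\Gamma)$ with $(u^N)^{\bar g}=v^N$, I lift $\bar g$ to $g\in G$; then $u^g\in v^N$, so $u^g=v^n$ for some $n\in N$ and $u^{gn^{-1}}=v$ with $gn^{-1}\in G$, giving vertex-transitivity of $G$, and the identical argument handles darts and edges. Reading the dart-transitivity equivalence contrapositively shows that $G$ is intransitive on darts precisely when $G/N$ is, and combining the vertex-, edge- and dart-statements yields that $\Gamma$ is $(G,\half)$-arc-transitive if and only if $\Gamma/N$ is $(G/N,\half)$-arc-transitive.
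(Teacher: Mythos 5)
Your argument is correct and complete. Note that the paper itself offers no proof of this lemma: it is stated as one of the ``basic'' facts that ``can be derived easily from'' the cited references on lifting automorphisms and elementary abelian covers, so there is no in-paper argument to compare against. Your verification is the standard one and covers all the points that matter -- in particular you correctly isolate faithfulness as the step that genuinely uses the covering hypothesis (local injectivity of $\p_v$ plus connectivity), and you correctly invoke the identification of $N$ with the group of covering transformations from Lemma~\ref{lem:nq} both for $G_v\cap N=\one$ and for showing $G$ is the full lift of $G/N$.
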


\section{Proof of the main result}

We begin this section by pointing out an intimate relationship between tetravalent $(G,\half)$-arc-transitive graphs
and $G$-arc-transitive digraphs of valence $2$. Let us first explain, how we understand the notion of a {\em digraph}.

Here, a {\em digraph} is a pair $\dGm = (\Gamma,\Delta)$, where $\Gamma$ is a graph (called the {\em underlying graph} of $\dGm$)
and  $\Delta \subseteq \D(\Gamma)$ such that $|\Delta \cap \{x,x^{-1}\}| = 1$ for every dart $x\in \D(\Gamma)$;
 the set $\Delta$ is then called the {\em arc-set} of $\dGm$, and denoted $\A(\dGm)$.
An {\em automorphism} of the digraph $\dGm$ is an automorphism of $\Gamma$ that preserves $\Delta$ set-wise,
and a group $G$ of automorphisms of $\dGm$ is {\em arc-transitive} provided that $G$ acts transitively on $\Delta$.
The digraph $\dGm$ is $k$-valent if every vertex is the initial vertex of precisely $k$ darts in $\Delta$ as well as $k$ darts
in $\D(\Gamma) \setminus \Delta$.

If $\Gamma$ is a $(G,\half)$-arc-transitive graph of valence $2k$, then $G$ has two orbits on $\D(\Gamma)$, 
each orbit containing precisely one dart of each edge. If $\Delta$ is one of these orbits,
then $(\Gamma,\Delta)$ is a $G$-arc-transitive digraph of valence $k$; we shall say that
$(\Gamma,\Delta)$ {\em arises} form the $(G,\half)$-arc-transitive graph $\Gamma$.
Conversely, if $\dGm$ is a $G$-arc-transitive digraph of valence $k$,
then its underlying graph is $(G,\half)$-arc-transitive and of valence $2k$.
In this sense, the study of tetravalent $(G,\half)$-arc-transitive graphs is equivalent to the study of $2$-valent
$G$-arc-transitive digraphs.

The following theorem represents one of the main ingredients of the proof of our main result.
Its proof in the case of simple graphs follows directly from an astonishing characterisation
of $G$-arc-transitive digraphs of prime valency for which the group $G$ contains a non-semiregular
abelian normal subgroup, proved by Praeger in \cite{PraHATD}.

\begin{theorem}
\label{the:Pra}
Let $\Gamma$ be a connected tetravalent $(G,\half)$-arc-transitive graph.
If $G$ contains an abelian normal subgroup $N$ such
that $N_v\not= \one$ for some vertex $v$, then $G$ is solvable and $G_v$ is elementary abelian.
\end{theorem}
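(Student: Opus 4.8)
The plan is to split the proof according to whether the underlying graph $\Gamma$ is simple: the substantive case of simple $\Gamma$ I would reduce, via the digraph correspondence described above, to Praeger's characterisation \cite{PraHATD}, while the non-simple graphs I would treat by a direct structural analysis made possible by the general graph model of Section~\ref{sec:graphs}. As a preliminary remark common to both cases, I would note that the hypothesis ``$N_v\neq\one$ for some $v$'' is equivalent to $N$ being \emph{non-semiregular}. Indeed, since $N$ is normal in the vertex-transitive group $G$, for every $g\in G$ we have $N_{v^g}=(N_v)^g$, so $N_v\neq\one$ forces $N_w\neq\one$ for every vertex $w$. This is exactly the hypothesis under which Praeger's theorem is stated.

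Assume first that $\Gamma$ is simple. Being $(G,\half)$-arc-transitive of valence $4$, the graph $\Gamma$ gives rise, as explained above, to a $G$-arc-transitive digraph $\dGm=(\Gamma,\Delta)$ of valence $2$, where $\Delta$ is one of the two $G$-orbits on $\D(\Gamma)$. Since $2$ is prime and $N$ is a non-semiregular abelian normal subgroup of $G$, the digraph $\dGm$ falls precisely within the scope of Praeger's characterisation of $G$-arc-transitive digraphs of prime valency admitting such a subgroup. From the structural description furnished by \cite{PraHATD} I would then read off both required conclusions --- that $G$ is solvable and that the vertex-stabiliser $G_v$ is elementary abelian. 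For simple $\Gamma$ the assertion of the theorem is thus essentially a translation of Praeger's conclusion back into the language of the half-arc-transitive graph $\Gamma$.

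It remains to handle non-simple $\Gamma$. By Lemma~\ref{lem:simple}, a connected tetravalent edge-transitive graph that is not simple is either the one-vertex graph with four semiedges, or a doubled cycle $\dC{n}$. The first graph cannot be $(G,\half)$-arc-transitive: each of its edges consists of a single self-inverse dart, so its edge-set and dart-set coincide and no group can act transitively on the edges without acting transitively on the darts. For $\dC{n}$ I would argue directly. Contracting each pair of parallel links yields a surjection from $\Aut(\dC{n})$ onto the automorphism group $\D_n$ (of order $2n$) of the $n$-cycle, with kernel the elementary abelian group $\ZZ_2^{\,n}$ generated by the swaps of parallel links. Any orientation $\Delta$ witnessing a $(G,\half)$-arc-transitive structure induces a cyclic orientation of the $n$-cycle, so every element of $G$ projects to a rotation; hence $G\le\ZZ_2^{\,n}\rtimes\ZZ_n$, which is solvable, while $G_v=G\cap\ZZ_2^{\,n}$ is elementary abelian (the small cases $n\le 2$ being immediate). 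Thus for $\dC{n}$ the conclusion holds unconditionally, and in particular whenever the hypothesis on $N$ is satisfied.

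The principal obstacle is the faithful transfer of Praeger's theorem. It is proved for graphs in the classical simple sense, so one must check carefully that the $2$-valent digraph arising from $\Gamma$ lies squarely within its hypotheses and, more importantly, that its conclusion genuinely delivers \emph{both} the solvability of $G$ and the elementary-abelian shape of $G_v$; in particular it must rule out the a priori possibility that the kernel of the action of $G_v$ on $\Gamma(v)$ inflates $G_v$ to a non-abelian $2$-group such as $\D_4$. The non-simple reduction, though routine, is a genuinely necessary second ingredient: it is exactly in order to accommodate the graphs $\dC{n}$ --- on which the semiregularity analysis above relies --- that the more general model of a graph admitting loops, multiple edges and semiedges is adopted.
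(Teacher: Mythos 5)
Your overall strategy coincides with the paper's: split on whether $\Gamma$ is simple, reduce the simple case via the $2$-valent digraph $\dGm$ to Praeger's characterisation of arc-transitive digraphs of prime valency admitting a non-semiregular abelian normal subgroup (the paper cites \cite[Theorem 2.9]{PraHATD} for the identification of $\dGm$ with a digraph $C_n(2,t)$ and \cite[Theorem 2.8(c)]{PraHATD} for the solvability of its automorphism group and the elementary-abelian structure of the vertex-stabiliser), and handle the non-simple graphs, which Lemma~\ref{lem:simple} reduces to the doubled cycles $\dC{n}$, by direct inspection. Your preliminary observation that normality of $N$ in the vertex-transitive group $G$ upgrades ``$N_v\neq\one$ for some $v$'' to non-semiregularity of $N$ is correct and is exactly what places $\dGm$ within the scope of Praeger's hypotheses. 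Your dismissal of the one-vertex graph with four semiedges is also fine, and in fact more explicit than the paper's parenthetical remark.

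There is, however, a genuine gap in your treatment of $\dC{n}$ for $n\ge 3$. You assert that the orientation $\Delta$ ``induces a cyclic orientation of the $n$-cycle, so every element of $G$ projects to a rotation''. This presupposes that the two darts of $\Delta$ lying on a pair of parallel links always have the same initial vertex, i.e.\ that parallel links are oriented consistently. That is only one of two possibilities, and the paper explicitly treats both. In the other case the two parallel links joining consecutive vertices carry opposite orientations; then $\Delta$ does not descend to an orientation of the underlying $n$-cycle at all, the reflections of the cycle preserve $\Delta$, and $\Aut(\dGm)\cong\D_n$ acts regularly on the $2n$ arcs, so that any arc-transitive $G$ equals $\D_n$ and contains reflections --- contradicting your claim that $G\le\ZZ_2^n\rtimes\ZZ_n$. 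The conclusion of the theorem still holds in that sub-case, since $\D_n$ is solvable and $G_v\cong\ZZ_2$ is elementary abelian, so the gap is easily repaired by adding this second alternative; but as written your argument does not cover it, and the assertion ``every element of $G$ projects to a rotation'' is false there.
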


\begin{proof}
Let $\dGm$ be the $G$-arc-transitive $2$-valent digraph arising from $(\Gamma,G)$.

Suppose first that $\Gamma$ is not simple. Then by Lemma~\ref{lem:simple}, $\Gamma \cong\dC{n}$
 for some $n\ge 1$ (note that the graphs with semiedges do not allow
$\half$-arc-transitive groups of automorphisms). If $n=1$, then 
$\Aut(\dGm) =\Aut(\dGm)_v \cong \ZZ_2$.
Similarly, if $n=2$, then 
$\Aut(\dGm) \cong \D_4$ and
$\Aut(\dGm)_v \cong \ZZ_2^2$. In both cases,
the automorphism group is solvable and the vertex-stabiliser is elementary abelian, as claimed.
We may thus assume that $n\ge 3$. Now consider a pair $\{e,e'\}$ of parallel links of $\dC{n}$
and the pair $\{x,x'\}$ of arcs 
of $\dGm$ lying on these two links. If $x$ and $x'$ have the same initial vertex,
then this is the case for every parallel pair of links, implying that
$\dGm$ can be viewed as obtained from the doubled cycle $\dC{n}$ by orienting all the edges consistently 
in the clock-wise direction.
Note that $\Aut(\dGm)_v \cong \ZZ_2^n$ and $\Aut(\dGm) \cong \ZZ_2^n \rtimes \C_n$ in this case, implying that $G$ is solvable and $G_v$ is elementary abelian, as claimed.
On the other hand, if $x$ and $x'$ have distinct initial vertices, then it can be easily
seen that $\Aut(\dGm)$ acts faithfully on $\V(\dGm)$, is isomorphic to $\D_{n}$, and that $\Aut(\dGm)_v \cong \ZZ_2$.

We may thus assume that $\Gamma$ is simple. By
 \cite[Theorem 2.9]{PraHATD} it follows that $\dGm$ is isomorphic to a certain
 digraph $C_n(2,t)$ (for some integer $t$), defined in \cite[Definition 2.6]{PraHATD}. 
 However, by \cite[Theorem 2.8(c)]{PraHATD},
 the automorphism group of such a digraph is solvable and the vertex-stabiliser
  is elementary abelian, concluding the proof of the theorem.
\end{proof}

\begin{lemma}
\label{lem:non-solv}
If $\Gamma$ is a tetravalent $(G,\frac{1}{2})$-arc-transitive graph with the vertex-stabiliser $G_v$ being non-abelian, then $G$ is non-solvable.
\end{lemma}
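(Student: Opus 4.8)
The strategy is to argue by contraposition: I will show that if $G$ is solvable, then $G_v$ must be abelian. The key leverage comes from Theorem~\ref{the:Pra}, which tells us that whenever $G$ contains a \emph{non-semiregular} abelian normal subgroup, then $G$ is solvable \emph{and} $G_v$ is elementary abelian. So the heart of the matter is to locate such a subgroup inside a solvable $G$, or more precisely to reduce to a situation where one is guaranteed to exist.

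\emph{First step.} I would assume $G$ is solvable and aim to produce a contradiction with the assumption that $G_v$ is non-abelian. Take a minimal normal subgroup $N$ of $G$. Since $G$ is solvable and finite, $N$ is elementary abelian, in particular abelian. Now there are two cases according to whether $N$ acts semiregularly on $\V(\Gamma)$ or not. If $N$ is \emph{non-semiregular}, i.e.\ $N_v \neq \one$ for some vertex $v$, then Theorem~\ref{the:Pra} applies directly and yields that $G_v$ is elementary abelian, hence abelian, contradicting our hypothesis. So the problematic case is when every minimal normal subgroup of $G$ is semiregular.

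\emph{Reduction via quotients.} The plan for the semiregular case is to quotient out by $N$ and induct. If $N$ is semiregular, then by Lemma~\ref{lem:nq} the quotient projection $\p_N\colon \Gamma \to \Gamma/N$ is a regular covering projection with $N$ as its group of covering transformations. By Lemma~\ref{lem:isoGv}, the quotient $\Gamma/N$ is again tetravalent and $(G/N,\half)$-arc-transitive, the group $G/N$ is again solvable (being a quotient of a solvable group), and crucially the stabilisers satisfy $(G/N)_{v^N} \cong G_v$, so the vertex-stabiliser is preserved \emph{isomorphically} under the quotient. Thus if $G_v$ is non-abelian, so is $(G/N)_{v^N}$. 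Passing to the quotient strictly decreases the order of the graph (since $N \neq \one$ acts semiregularly, $|N|$ divides $|\V(\Gamma)|$ nontrivially), so by induction on the order of $\Gamma$ we may assume the statement already holds for $\Gamma/N$; this forces $G/N$ to be non-solvable, the desired contradiction.

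\emph{The main obstacle.} The delicate point is ensuring the induction is set up cleanly and that the base case and the semiregular reduction interlock properly: one must confirm that $G/N$ acts on $\Gamma/N$ as a genuine half-arc-transitive group with an isomorphic, still non-abelian, point stabiliser, which is exactly what Lemma~\ref{lem:isoGv} delivers, and that the process terminates. Termination is clear because either we hit a minimal normal subgroup that is non-semiregular (and Theorem~\ref{the:Pra} finishes things immediately), or we keep quotienting by nontrivial semiregular subgroups and the order of the graph strictly drops. Since every solvable group has a minimal normal subgroup, and a non-abelian stabiliser cannot survive all the way down to a one-vertex situation (where the stabiliser would be the whole group acting on four darts, necessarily abelian by the analysis of the non-simple base cases in Theorem~\ref{the:Pra}), the descent must eventually reach a non-semiregular abelian normal subgroup, at which point Theorem~\ref{the:Pra} forces $G_v$ to be abelian --- contradicting the hypothesis and completing the contrapositive.
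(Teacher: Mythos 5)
Your proposal is correct and follows essentially the same route as the paper: take a minimal normal subgroup $N$ of the solvable group $G$ (necessarily elementary abelian), use Theorem~\ref{the:Pra} to rule out $N_v \neq \one$ when $G_v$ is non-abelian, and then pass to the quotient $\Gamma/N$ via Lemmas~\ref{lem:nq} and~\ref{lem:isoGv} to contradict minimality. The paper phrases the descent as a minimal counterexample rather than an explicit induction on $|\V(\Gamma)|$, but these are the same argument.
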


\begin{proof}
Suppose that the lemma is false and let $\Gamma$ be a minimal counter-example, that is, 
$\Gamma$ is a graph of smallest order admitting a solvable
 $\frac{1}{2}$-arc-transitive group of automorphisms $G$ with $G_v$ non-abelian. 

Let $N$ be a minimal normal subgroup of $G$. Since $G$ is solvable, $N$ is elementary abelian. 
Since $G_v$ is non-abelian, Theorem~\ref{the:Pra} implies that $N_v = \one$.
But then, by Lemma~\ref{lem:nq} and Lemma~\ref{lem:isoGv},  
$\Gamma/N$ is a tetravalent $(G/N,\half)$-arc-transitive graph with 
$G/N$ solvable and the vertex-stabiliser in $G/N$ being isomorphic to $G_v$, and thus non-abelian.
However, this contradicts the minimality of $\Gamma$, and thus proves the lemma.
\end{proof}

The subgroup $N$ of a finite group $G$ generated by all solvable normal subgroups of $G$ is
itself solvable and normal in $G$ and is called the {\em maximal solvable normal subgroup} of $G$.
Note that $G/N$ then contains no non-trivial solvable normal subgroups. Such groups are called
{\em semisimple} (or also groups with a {\em trivial solvable radical}).

\begin{lemma}
\label{lem:quoN}
Let $\Gamma$ be a tetravalent $(G,\half)$-arc-transitive graph with $G$ non-solvable and let $N$ be the maximal solvable normal subgroup of $G$. Then the quotient projection $\Gamma \to \Gamma/N$ is a solvable regular covering projection,
 with $\Gamma/N$ a simple $(G/N,\frac{1}{2})$-arc-transitive graph, along which the semisimple group $G/N$ lifts.
\end{lemma}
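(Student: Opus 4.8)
I want to show four things about the quotient projection $\p_N \colon \Gamma \to \Gamma/N$: that it is a regular covering projection (hence solvable, since $N$ is solvable), that $\Gamma/N$ is simple, that $\Gamma/N$ is $(G/N,\half)$-arc-transitive, and that $G/N$ lifts along $\p_N$. The natural strategy is to invoke the machinery already assembled in the excerpt: Lemma~\ref{lem:nq} converts the covering-projection claim into a semiregularity claim on $N$, and Lemma~\ref{lem:isoGv} then hands us the lifting and the transfer of arc-transitivity essentially for free. So the crux reduces to verifying that $N$ acts semiregularly, i.e.\ that $N_v = \one$ for every vertex $v$.

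**Semiregularity of $N$.** First I would observe that $N_v$ is the key object. Let me argue that $N_v = \one$. Consider the socle-type structure: since $N$ is the maximal solvable normal subgroup of $G$, it is solvable, so it contains a characteristic (hence normal in $G$) minimal abelian normal subgroup $A$ of $G$ sitting inside it—concretely, take $A$ to be a minimal normal subgroup of $G$ contained in $N$; because $G$ is solvable on $N$, $A$ is elementary abelian. Now $A$ is an abelian normal subgroup of $G$, so Theorem~\ref{the:Pra} applies: if $A_v \neq \one$ for some $v$, then $G$ would be solvable, contradicting the hypothesis that $G$ is non-solvable. Hence $A_v = \one$ for every $v$, i.e.\ $A$ is semiregular. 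To bootstrap from $A$ to all of $N$, I would pass to the quotient $\Gamma/A$: by Lemma~\ref{lem:nq} this is a regular covering projection, and by Lemma~\ref{lem:isoGv} the graph $\Gamma/A$ is tetravalent $(G/A,\half)$-arc-transitive with $(G/A)_{v^A} \cong G_v$. The group $N/A$ is the maximal solvable normal subgroup of $G/A$, and $G/A$ is again non-solvable (since $G/N \cong (G/A)/(N/A)$ is non-trivial semisimple). An induction on the order of $\Gamma$—the same engine used in Lemma~\ref{lem:non-solv}—then shows $N/A$ is semiregular on $\Gamma/A$, and combining the semiregularity of $A$ on $\Gamma$ with that of $N/A$ on $\Gamma/A$ yields $N_v = \one$, so $N$ is semiregular.

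**Assembling the conclusions.** With $N$ semiregular, Lemma~\ref{lem:nq} gives that $\p_N$ is a regular covering projection with $N$ as its group of covering transformations; it is solvable by definition since $N$ is solvable. Lemma~\ref{lem:isoGv} then immediately supplies that $G/N$ acts faithfully on $\Gamma/N$, that $G$ is the lift of $G/N$ (so $G/N$ lifts along $\p_N$), and that $\Gamma/N$ is $(G/N,\half)$-arc-transitive, with $(G/N)_{v^N} \cong G_v$. The only remaining point is that $\Gamma/N$ is \emph{simple}. Here I would use Lemma~\ref{lem:simple}: $\Gamma/N$ is a connected tetravalent $G/N$-edge-transitive graph, so it is either simple, a doubled cycle $\dC{n}$, or the single-vertex graph with four semiedges. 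But $\Gamma/N$ carries a $(G/N,\half)$-arc-transitive action with $G/N$ non-solvable; by the computation in the proof of Theorem~\ref{the:Pra}, the automorphism groups of the non-simple tetravalent $\half$-arc-transitive candidates ($\dC{n}$ and its variants) are all solvable, contradicting the non-solvability of $G/N$. Hence $\Gamma/N$ must be simple.

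**The main obstacle.** The delicate step is the bootstrapping from semiregularity of the minimal abelian $A$ to semiregularity of the full solvable radical $N$. Theorem~\ref{the:Pra} only directly controls \emph{abelian} normal subgroups, so I cannot apply it to $N$ in one shot; I must peel off minimal normal (elementary abelian) layers one at a time and descend through iterated quotients $\Gamma \to \Gamma/A \to \cdots$, checking at each stage that the relevant hypotheses (non-solvability of the ambient group, tetravalence, $\half$-arc-transitivity) persist. Making the induction clean—ensuring that the maximal solvable radical behaves correctly under quotienting and that semiregularity composes along a tower of covers—is where the real care is needed, though each individual verification is a routine application of Lemmas~\ref{lem:nq} and~\ref{lem:isoGv} together with Theorem~\ref{the:Pra}.
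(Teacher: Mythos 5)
Your proposal is correct and follows essentially the same route as the paper: reduce everything to semiregularity of $N$, peel off an abelian normal layer (you take a minimal normal subgroup of $G$ inside $N$, the paper takes $\soc(N)$, which is characteristic in $N$ hence normal in $G$ and abelian), kill its point stabilisers via Theorem~\ref{the:Pra} using non-solvability of $G$, and then induct on the order of $\Gamma$ through the quotient, assembling the remaining claims from Lemmas~\ref{lem:simple}, \ref{lem:nq} and \ref{lem:isoGv}. The paper phrases the induction as a minimal counterexample, but the argument is the same.
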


\begin{proof}
Observe first that all we need to show is that $N$ is semiregular. 
The fact that $\Gamma/N$ is simple then follows from Lemma~\ref{lem:simple} and
the fact that $G/N$ is non-solvable, and the rest follows from Lemma~\ref{lem:nq}
and Lemma~\ref{lem:isoGv}.

Suppose now that the lemma is false and let $\Gamma$ be a minimal counterexample,
(in terms of $|\V(\Gamma)|$). Let $G$ be the corresponding non-solvable $\half$-arc-transitive group
of automorphisms of $\Gamma$ and $N$ the maximal solvable normal subgroup of $G$ such that $N_v\not = \one$.

Since $N$ is solvable and non-trivial, the socle $S=\soc(N)$ is non-trivial and abelian.
Since $S$ is characteristic in $N$, it is normal in $G$.
Since $G$ is non-solvable, Theorem~\ref{the:Pra} implies that $S_v = \one$.
By Lemma~\ref{lem:nq}, the quotient projection $\Gamma \to \Gamma/S$ is then a covering projection,
and by Lemma~\ref {lem:isoGv},
$\Gamma/S$ is a $(G/S,\frac{1}{2})$-arc-transitive graph.
 Note that $N/S$ is the maximal solvable subgroup of $G/S$.
Since $G/S$ is non-solvable, the minimality of $\Gamma$ implies that 
$N/S$ acts semiregularly on $\V(\Gamma/S)$.
But then, by Lemma~\ref{lem:isoGv}, $N_v = \one$, contradicting our assumption on $N$.
\end{proof}

\subsection{Constructing a census of $(G,\half)$-arc-transitive graphs with $G_v\cong \D_4$}

To shorten the text, we will call
 a pair $(\Gamma,G)$, where $\Gamma$ is a connected tetravalent 
$(G,\half)$-arc-transitive graph with the vertex-stabiliser isomorphic to the dihedral
group $\D_4$, {\em relevant}. Observe that if $(\Gamma,G)$ is relevant, then
$|G| = |G_v|\, |\V(\Gamma)| = 8 |\V(\Gamma)|$.

Lemmas~\ref{lem:non-solv} and \ref{lem:quoN} suggest a strategy for constructing all relevant pairs $(\Gamma,G)$
with $\Gamma$ of order at most $M$ (for a given fixed integer $M$).

\begin{enumerate}
\item Find the set $\cS$ of all semisimple groups of order at most $8M$.
\item Find the set $\cP_0$ of all relevant pairs $(\Gamma,G)$ with $G$ isomorphic
to some group in $\cS$.  
\item For every relevant pair $(\Gamma,G) \in \cP_0$, find all solvable regular covering projections
$\p\colon \tilde{\Gamma} \to \Gamma$ with $|\V(\tilde{\Gamma})|\le M$ along which $G$ lifts, and compute the lift $\tilde{G}$ of $G$ along $\p$. Let $\tilde{\cP}$ denote the set of all pairs
$(\tilde{\Gamma},\tilde{G})$ obtained in this way. 
\end{enumerate}

The union of  the sets $\cP_0$ and $\tilde{\cP}$, is then the set of all the relevant
pairs $(\Gamma,G)$ with $|\V(\Gamma)|\le M$.
Let us now discuss each step of this approach.

Step (1) is easy if one has a database of all simple groups of order at most $8M$ available. 
Namely, if  $G$ is a semisimple group, its socle $\soc(G)$ is isomorphic to a group
$T_1^{\alpha_1} \times  \ldots \times T_k^{\alpha_k}$ for some
pairwise non-isomorphic non-abelian simple groups $T_i$ and  positive integers $\alpha_i$.
Moreover, $G$ acts by conjugation upon $\soc(G)$ faithfully and thus embeds into $\Aut(\soc(G))$.
In this sense, we have $\soc(G)\le G \le \Aut(\soc(G))$.

For example, if $M=10752$, then, using the database of simple groups in {\sc Magma}, one can
easily show that there are precisely $100$ semisimple groups of order at most $8M$.

One of the possible approaches to Step (2) relies on the classification of
the ``universal groups'' acting half-arc-transitively on an infinite tetravalent tree.
These groups were determined in \cite{MarNed3} (see also \cite{PVdigraphs} 
and \cite[Section 3.2 and Table 1]{PSVdigraphs}).
In short, if follows from these results that every relevant pair arises
from the tetravalent infinite tree ${\rm T}_4$ and a group
$$
 U \cong \langle a,b,c,g \mid a^2,b^2,c^2,g^{-1}agb,g^{-1}bgc, (ab)^2, (bc)^2, (ac)^2b \rangle
$$
acting $\half$-arc-transitively on ${\rm T}_4$. Here
the subgroup $\langle a,b,c \rangle$, which is isomorphic to $\D_4$, corresponds to the stabiliser $U_{\tilde{v}}$
of a vertex $\tilde{v}\in \V({\rm T}_4)$
and $g$ to an automorphism of ${\rm T}_4$  mapping $\tilde{v}$ to a neighbour of $\tilde{v}$.
(Note that up to conjugacy in $\Aut({\rm T}_4)$, the above group is the unique 
$\half$-arc-transitive subgroup of $\Aut({\rm T}_4)$
satisfying $U_{\tilde{v}} \cong \D_4$.)

Now, for  every relevant pair $(\Gamma,G)$, 
there exists an epimorphism $f\colon U \to G$
such that for some vertex $v\in \V(\Gamma)$, we have $G_v = f(\langle a,b,c \rangle)$ and
$f(g)$ mapping  $v$ to a neighbour of $v$. In particular, if we know what $f$ is, then
the graph $\Gamma$ can be reconstructed as the coset graph $\Cos(G,f(\langle a,b,c\rangle),f(g))$
(see \cite[Section 3]{PSVdigraphs} for more information).

Step (2) thus amounts to finding all epimorphisms from $U$ to each of the $100$ semisimple groups of order at most $8\cdot 10752$.
This was easily done by the algorithms implemented in {\sc Magma} (here the fact that $U$ can be generated by two elements,
$a$ and $g$, is very helpful). The computations
 yield the set $\cP_0$ of $16$ relevant pairs $(\Gamma,G)$ with $G$ semisimple.
 The properties of these pairs are summarized  in Table~\ref{table:base}.
\begin{table}[!ht]\footnotesize
\caption{Properties of relevant pairs $(\Gamma,G)$ of the set $\cP_0$.} 
\centering 
\begin{tabular}{c  c  c  c  c  c} 
\hline\hline 
\\[-2ex]
ID & $|\Gamma|$ & $|\Aut(\Gamma)|$ & $\soc(G)$ & $|G|$ \\ [0.5ex] 
\hline 
\\[-2ex]
1 &  42 & 672  & $\PSL_2(7)$ & 336\\
2 &  90 &  2880 & $\Alt_6$ & 720\\
3 &  90 & 2880 & $\Alt_6$ & 720\\
4 &  306 & 4896 & $\PSL_2(17)$ & 2448\\
5 &  702 &  22464 & $\PSL_3(3)$ & 5616\\
6 &  756 &  12096 & $\textrm{U}_3(3)$ & 6048\\
7 &  1404 & 44928 & $\PSL_3(3)$ & 11232\\
8 &  1518 & 24288 & $\PSL_2(23)$ & 12144\\
9 &  1860 & 29760 & $\PSL_2(31)$ & 14880\\
10 &  1950 & 62400 & $\PSL_2(25)$ & 15600\\
11 &  1950 & 62400 & $\PSL_2(25)$ & 15600\\
12 &  5040 & 80640 & $\Alt_8$ & 40320\\
13 &  6486 & 103776 & $\PSL_2(47)$ & 51888\\
14 &  7056 & 225792 & $\PSL_2(7) \times \PSL_2(7)$ & 56448\\
15 &  7056 & 225792 & $\PSL_2(7) \times \PSL_2(7)$ & 56448\\
16 &  8610 & 137760 & $\PSL_2(41)$ & 68880\\\hline
\end{tabular}
\label{table:base} 
\end{table}

Finally, Step (3) relies on computing solvable regular covering projections along which a given group lifts, up to a prescribed order $M$ of the respective covering graphs. Such an algorithm has been developed and implemented in {\sc Magma} by the second author; it is described in detail in \cite{Poz14}, and is based on the theory of elementary abelian regular covering projections, presented in \cite{ElAbCov}. 
The essence of this algorithm relies on the fact that for every solvable
$G$-admissible regular covering projection $\p \colon \Gamma \to \Lambda$
there exists a sequence of elementary abelian regular
covering projections $\p_i \colon \Gamma_{i} \to \Gamma_{i-1}$ for $i\in \{1,\ldots, k\}$, and a sequence of
groups $G_{i} \le \Aut(\Gamma_{i})$ for $i\in \{0,\ldots, k\}$, such that the following hold:
\begin{enumerate}
\setlength{\itemsep=0pt}
\item 
 $\p = \p_1 \circ  \ldots \circ \p_k$ (in particular, $\Gamma_0 = \Lambda$ and $\Gamma_k=\Gamma$);
 \item
 $G_0 = G$;
 \item 
 $\p_i$ is a minimal $G_{i-1}$-admissible covering projection and $G_{i}$ is the lift of $G_{i-1}$ along $\p_i$
  for every $i\in \{1, \ldots, k\}$.
\end{enumerate}

It is now clear that Step (3) can be completed as follows: For each admissible pair $(\Gamma,G) \in \cP_0$, find
all minimal $G$-admissible elementary abelian covering projections $\p \colon \tilde{\Gamma} \to \Gamma$,
and the corresponding lifts $\tilde{G}$ of $G$, satisfying the condition $|\V(\tilde{\Gamma})| \le M$.
Denote the set of all thus obtained pairs $(\tilde{\Gamma},\tilde{G})$
by $\cP_1$ and think of it as forming the first level of the computations.
It is obvious that only those admissible pairs $(\Gamma,G) \in \cP_0$ had to be considered, for which
$|\V(\Gamma)| \le \frac{M}{2}$. In particular, if $M=10752$, then only the first $12$ pairs from Table~\ref{table:base}
have to be considered as base pairs. This procedure is then repeated recursively for $i=1,2,\ldots$, by applying it to the $i$-th level $\cP_i$ (instead of $\cP_{0}$) and yielding the set $\cP_{i+1}$ (instead of $\cP_1)$, until all the graphs in $\cP_i$ have order larger than $\frac{M}{2}$.
The union of all $\cP_i$, $i\ge 1$, then forms the set  $\tilde{\cP}$ defined in the description of Step (3).

If $M=10752$, then this procedure terminates at level $8$ and results in the set $\tilde{\cP}$ of $760$ relevant pairs.
To give the reader an idea of the number of covers involved together with computation times, we present some statistics in Table~\ref{table:level}. 
These were all run on a 2.93 GHz Quad-Core $\textrm{Intel}^{\circledR}$ $\textrm{Xeon}^{\circledR}$ processor X7350 at the Faculty of Mathematics and Physics, University of Ljubljana.
For each of the $12$ pairs $(\Gamma,G)$ from $\cP_0$ satisfying $|\V(\Gamma)| \le 5381$,
 each row of Table~\ref{table:level} displays 
the number of solvable regular covering projections obtained on each level during the computations and the computation time, respectively.
\begin{table}[!ht]\footnotesize
\caption{The number of covers obtained on each level together with computation times.} 
\centering 
\begin{tabular}{c  c  c  c  c  c  c  c  c  c  c} 
\hline\hline 
\\[-2ex]
ID &  Lev.~$1$ & Lev.~$2$ & Lev. $3$ & Lev. $4$ & Lev. $5$  & Lev. $6$ & Lev. $7$ & Lev. $8$ & Time\\ [0.5ex] 
\hline 
\\[-2ex]
1 &  56 & 90  & 75 & 34 & 15 & 7 & 2 & 1 & 62 hrs 27 mins\\
2 &  31 &  51 & 43 & 20 & 9 & 3  & - & - & 1 hr 54 mins \\
3 &  33 &  75 &  73 & 34 & 15 & 5& - & - & 2 hrs 43 mins \\
4 &  11 &  13 & 7 & 2 & 1 & - & - & - & 31 mins \\
5 &  6 &  6 & 2 & - & - & - & - & - & 8 mins \\
6 &  6 &  5 & 2 & - & - & - & - & - & 6 mins \\
7 &  4 &  2 & - & - & - & - & - & - & 1 min 27 secs\\
8 &  4 &  2 & - & - & - & - & - & - & 1 min 26 secs\\
9 &  3 &  1 & - & - & - & - & - & - & 34 secs\\
10 &  3 &  1 & - & - & - & - & - & - &  38 secs\\
11 &  3 &  1 & - & - & - & - & - & - &  38 secs\\
12 &  3 &  - & - & - & - & - & - & - & 1 min 2 secs\\
\hline
\end{tabular}
\label{table:level} 
\end{table}

Putting all of this together, we obtain $776$ relevant pairs with graph-order at most $10752$.
Furthermore, considering graphs up to isomorphism we have the following result.

\begin{theorem}
\label{the:census}
There are precisely $564$ pairwise non-isomorphic connected tetravalent graphs  of order at most $10752$
admitting a $\half$-arc-transitive group $G$ with $G_v$ isomorphic to $\D_4$.
\end{theorem}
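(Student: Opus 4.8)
The plan is to prove the theorem in two stages: first to establish that the three-step construction described above is \emph{complete}, in the sense that every relevant pair $(\Gamma,G)$ with $|\V(\Gamma)|\le M=10752$ is isomorphic to a member of $\cP_0\cup\tilde{\cP}$; and then to pass from this list of pairs to isomorphism classes of underlying graphs. For the completeness step I would argue as follows. Let $(\Gamma,G)$ be an arbitrary relevant pair with $|\V(\Gamma)|\le M$. Since $G_v\cong\D_4$ is non-abelian, Lemma~\ref{lem:non-solv} forces $G$ to be non-solvable. Taking $N$ to be the maximal solvable normal subgroup of $G$, Lemma~\ref{lem:quoN} then guarantees that $\p_N\colon\Gamma\to\Gamma/N$ is a solvable regular covering projection onto a relevant pair $(\Gamma/N,G/N)$ with $\Gamma/N$ simple and $G/N$ semisimple, and that $G$ is the lift of $G/N$. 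As $|G/N| = 8\,|\V(\Gamma/N)|\le 8M$, the base pair $(\Gamma/N,G/N)$ is isomorphic to one of the pairs produced in Step~(2), and $(\Gamma,G)$ is recovered from it as a solvable cover. Hence $(\Gamma,G)$ lies in $\tilde{\cP}$, or in $\cP_0$ itself when $N=\one$, so $\cP_0\cup\tilde{\cP}$ captures every relevant pair up to isomorphism.

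Next I would justify that Steps~(1)--(3) genuinely enumerate all such pairs within the order bound. Step~(1) lists the finitely many semisimple groups of order at most $8M$ from the simple-group database, and Step~(2) extracts $\cP_0$ by computing all epimorphisms $U\to G$ for $G$ in this list, exploiting that $U$ is $2$-generated. For Step~(3), the crucial observation is the one recorded after its statement: every solvable regular $G$-admissible covering projection factors as a tower $\p_1\circ\cdots\circ\p_k$ of minimal elementary abelian $G_{i-1}$-admissible projections, so the recursive, level-by-level search of \cite{Poz14} reaches every solvable cover. The recursion terminates because each non-trivial elementary abelian cover multiplies the order by a factor of at least $2$; thus no base pair of order exceeding $M/2$ can be lifted within the bound, which is precisely the stopping criterion employed. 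Collating $\cP_0$ with the $\tilde{\cP}$ obtained through level $8$ yields the $776$ relevant pairs.

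Finally I would pass from pairs to graphs. Two distinct relevant pairs may share, up to isomorphism, the same underlying graph — either because a single graph admits several half-arc-transitive groups, or because different covering data produce isomorphic covers — so the $776$ pairs need not give $776$ distinct graphs. Extracting the underlying graph of each pair and computing graph-isomorphism classes in {\sc Magma} collapses the list, and the count that emerges is $564$, which proves the theorem.

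The hardest part will be the completeness of the covering enumeration in Step~(3): one must be certain that the tower decomposition into minimal elementary abelian pieces is exhaustive and that the algorithm of \cite{Poz14} misses no admissible cover, since any gap here would silently drop graphs from the census. The subsequent isomorphism reduction $776\to 564$ is conceptually routine but depends on reliable graph-isomorphism testing, and is the second point at which care is required.
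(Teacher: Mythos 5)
Your proposal follows the paper's own argument essentially verbatim: reduction of an arbitrary relevant pair to a semisimple base pair via Lemmas~\ref{lem:non-solv} and \ref{lem:quoN}, enumeration of base pairs through epimorphisms from the universal group $U$, reconstruction of all solvable covers as towers of minimal elementary abelian $G$-admissible covering projections, and a final graph-isomorphism reduction from $776$ pairs to $564$ graphs. This is correct and is the same approach the paper takes.
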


For each of these $564$ graphs, the approach ensures existence of a $\half$-arc-transitive group of
automorphisms $G$ with $G_v \cong \D_4$. However, the full automorphism group of these graphs is in all but two cases
larger than this guaranteed $G$ and is in fact dart-transitive. The two exceptional graphs, where the $\half$-arc-transitive group
$G$ equals the full automorphism group, have order $10752$ and are precisely the two graphs constructed in
\cite{ConMar} and \cite{ConPotSpa}.
This confirms Theorem~\ref{the:main}.

The data about graphs of our census is available on-line at \cite{PP14}. The package contains three files. The ``Census-GHAT-10752-Gv8.mgm"  file  contains
{\sc Magma} code that generates the corresponding graphs. To load the contents of this file  into {\sc Magma}, the file ``Census-GHAT-10752-Gv8.txt" (containing the list of neighbours of each graph) is needed. Additionally, the ``Census-GHAT-10752-Gv8.csv" file is a ``comma separated values" file representing a spreadsheet containing some precomputed graph invariants. Each line of this file represents one of the graphs in the census, and has four fields as follows:
\begin{enumerate}
\item[(i)] $\mathtt{ID:}$ the ID number of the graph;
\item[(ii)] $\mathtt{|V|:}$ the order of the graph;
\item[(iii)] $\mathtt{|A_v|}:$ the order of the vertex-stabiliser in the automorphism group of the graph;
\item[(iv)] $\mathtt{AT:}$ this field contains ``true" if the graph is arc-transitive and ``false" otherwise.
\end{enumerate}



\end{document}